\newtheorem{thm}{Theorem}
\newtheorem{lem}[thm]{Lemma}
\newtheorem{cor}[thm]{Corollary}
\newtheorem{theorem}[thm]{Theorem}
\theoremstyle{definition}
\newtheorem{rk}[thm]{Remark}
\newtheorem{example}[thm]{Example}
\newcommand{\Z}{\mathbb{Z}}
\def \x {\times}
\def \eu{{\text{e}}}
\begin{document}

\title[Non-holomorphic surface bundles and Lefschetz fibrations]
{Non-holomorphic surface bundles \\ and Lefschetz fibrations}

\author[R. \.{I}. Baykur]{R. \.{I}nan\c{c} Baykur}
\address{Max Planck Institute for Mathematics, Bonn, Germany \newline
\indent Department of Mathematics, Brandeis University, Waltham MA, USA}
\email{baykur@mpim-bonn.mpg.de, baykur@brandeis.edu}

\begin{abstract}
We show how certain stabilizations produce infinitely many closed oriented $4$-manifolds which are the total spaces of genus $g$ surface bundles (resp.\,Lefschetz fibrations) over genus $h$ surfaces and have non-zero signature, but do not admit complex structures with either orientations, for ``most'' (resp.\,all) possible values of $g \geq 3$ and $h \geq 2$ (resp. $g \geq 2$ and $h \geq 0$).
\end{abstract}

\maketitle
\setcounter{secnumdepth}{2}
\setcounter{section}{0}

% ==========================================================================================================
\section{Introduction}
Thurston and Gompf respectively showed that a closed oriented $4$-manifold which is the total space of a genus $g$ surface bundle or a Lefschetz fibration\footnote{Throughout this note, all manifolds are smooth, and all Lefschetz fibrations are assumed to have non-empty critical locus so as to make a clear distinction from surface bundles.} over a genus $h$ surface always admits a symplectic structure so that the fibration becomes symplectic, provided the regular fiber is homologically essential ---which might only fail for some genus one surface bundles. A natural question to ask is for which values of $g$ and $h$ the same holds when one replaces ``symplectic'' by ``complex'' above. The purpose of this note is to present answers to this question for both surface bundles and Lefschetz fibrations. 

As shown by Kotschick in \cite{K2} and independently by Hillman in \cite{Hil}, for $g, h \geq 2$, if the total space of a genus $g$ bundle over a genus $h$ surface admits a complex structure, then the bundle map can be homotoped to a holomorphic one for appropriate choices of complex structures on the total space and the base. Given such a holomorphic bundle $X \to B$, one gets a classifying map $\phi: B \to \mathcal{M}_g$, where $\mathcal{M}_g$ is the moduli space of non-singular genus $g$ curves. Using the Atiyah-Singer index theorem for families, one can then express $\sigma(X)$, the signature of the $4$-manifold $X$, as the evaluation on $B$ of the pull-back of the first Chern class of the Hodge bundle on $\mathcal{M}_g$ by $\phi$ \cite{BD, Atiyah, Smith}. It follows that the signature $\sigma(X) \neq 0$ when $g \geq 3$, unless the complex structure on $X$ is isotrivial \cite{BD}. For any $g, h \geq 1$, it is indeed easy to construct surface bundles with zero signature whose total spaces cannot be isotrivial or even complex; see Example \ref{Elementary} below. So for surface bundles, one shall reformulate the above question by adding the assumption on the non-vanishing of the signature, when $g \geq 3$ and $h \geq 2$.\footnote{We are therefore addressing a more extensive version of the MathOverflow question of Jim Bryan's, who asked whether every surface bundle could be made holomorphic, provided the signature of the total space is non-zero. Bryan later argued that the answer to his question should be ``not all' since otherwise one arrives a contradiction by taking fiber sums with trivial bundles over surfaces of arbitrarily big genera and then looking at the induced maps from these bundles into the moduli space $\mathcal{M}_g$. In this note we present topological constructions of such counter-examples for ``most'' pairs of positive integers $g,h$ and up to homotopies of such bundle maps.} It is well-known that the signature vanishes in the remaining low genera cases. 

In the next section, we will show how to obtain infinitely many (pairwise non-homotopic) closed orientable $4$-manifolds which are the total spaces of genus $g$ surface bundles over genus $h$ surfaces and have non-zero signatures, but do not admit complex structures with either orientations, for ``most'' possible values of $g$ and $h$, via stabilizations with  some elementary surface bundles (Theorem \ref{maintrick}). We prove that for all $g \geq 4$ and $h \geq 9$ such families exist (Corollary \ref{noncomplexbundles1}), and provide a weaker result for smaller base genera: For any positive integer $N$ and for $3 \leq h \leq 8$, we show that there are such families which are the total spaces of genus $g$ surface bundles over genus $h$ surfaces for some $g \geq N$ (Corollary \ref{noncomplexbundles2}). Constructions of these families rely on the work of Endo, Korkmaz, Kotschick, Ozbagci, and Stipsicz \cite{EKKOS} and Bryan and Donagi \cite{BD}, respectively.

There are many examples of torus bundles over tori, whose total spaces do not admit complex structures. (A comprehensive list can be found in \cite{Geiges}.) When the fibration has nodal singularities however, from Matsumoto's  classification result in \cite{Mat}, it is easy to conclude that any genus one Lefschetz fibration is smoothly isomorphic to a holomorphic elliptic Lefschetz fibration. Thus, one might wonder if the situation is more rigid for Lefschetz fibrations, at least when the base genus is positive. We show that this is not true either, by extending earlier results of Korkmaz given for the base genus zero case, and once again employing elementary stabilizations (Corollary \ref{noncomplexLFs}).

We can summarize our main results spelled out above as follows:

\begin{thm} \label{main} There are infinite families of (pairwise non-homotopic) closed \linebreak oriented $4$-manifolds with non-zero signatures, which do not admit any complex structure with either orientation, such that for fixed pairs of integers $g$ and $h$, each $4$-manifold in the respective family is the total space of \\
\noindent \text{\emph (i)} a genus $g$ surface bundle over a genus $h$ surface, for $g \geq 4$ and $h \geq 9$, \\
\noindent \text{\emph (ii)} a genus $g$ surface bundle over a genus $h$ surface, for $g=(4n - 2)n^2 + 1$, $n \geq 2$, and $3 \leq h \leq 8$, or \\ 
\noindent \text{\emph (iii)} a genus $g$ Lefschetz fibration over a genus $h$ surface, for $g \geq 2$ and $h \geq 0$.
\end{thm}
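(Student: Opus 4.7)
My plan is to reduce all three parts to a single stabilization principle, which I expect to be stated as Theorem~\ref{maintrick} in the paper, and then to apply it to three families of seed examples drawn from~\cite{EKKOS},~\cite{BD}, and Korkmaz's earlier base-genus-zero work. Roughly, the principle should say: if $X$ is the total space of a genus-$g$ surface bundle (or Lefschetz fibration) $f\colon X \to \Sigma_{h_0}$ with $\sigma(X) \neq 0$ that admits no complex structure with either orientation, then for every $h' \geq 1$ and every suitable gluing diffeomorphism $\phi$, the fiber sum $X \#_{\phi} (\Sigma_g \times \Sigma_{h'})$ along a regular fiber is again the total space of such a fibration over $\Sigma_{h_0 + h'}$, with the same signature and still no complex structure on either orientation. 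Signature additivity and $\sigma(\Sigma_g \times \Sigma_{h'}) = 0$ take care of $\sigma \neq 0$; to preserve non-complexness I would use Kotschick--Hillman on the given orientation to reduce to the holomorphic case and then derive a contradiction with the Bryan--Donagi--Atiyah signature formula applied to the classifying map into $\mathcal{M}_g$. Varying $\phi$ (or iterating the stabilization) would produce pairwise non-homotopic examples, distinguished for instance by their fundamental groups or by numerical invariants of the induced symplectic structure.

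For part~(i), I would use the commutator-length constructions of~\cite{EKKOS} to produce, for each $g \geq 4$, a seed surface bundle over a base of small genus with non-zero signature whose holomorphic realization is already obstructed by a signature bound, and then stabilize to reach any $h \geq 9$. For part~(ii), the Bryan--Donagi branched covers in~\cite{BD}, which exist precisely in fiber genera $g=(4n-2)n^2+1$ with $n \geq 2$, provide seeds with the largest currently known signature-to-base-genus ratio; a mild modification should render them non-complex while retaining the surface bundle structure, and stabilization brings the base genus into the range $3 \leq h \leq 8$. For part~(iii), Korkmaz's non-complex genus-$g$ Lefschetz fibrations over $S^2$ serve as seeds, and fiber-summing with the trivial bundle $\Sigma_g \times \Sigma_h$, which introduces no new critical points, delivers the desired Lefschetz fibrations over $\Sigma_h$ for every $g \geq 2$ and $h \geq 0$.

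The main obstacle I foresee lies in the orientation-reversed non-complexness inside the stabilization step. Kotschick--Hillman cleanly handles the given orientation, but offers nothing for $\overline{X}$; I would therefore need an auxiliary orientation-sensitive obstruction that is well-behaved under fiber summing with a trivial bundle, for instance a Kodaira-dimension-driven inequality involving $(c_1^2, \chi_h, \sigma)$ or Seiberg--Witten input sensitive to the sign of $\sigma$. Verifying that each family of seed examples satisfies such an obstruction with both orientations, and that the obstruction genuinely propagates through the stabilization, is where I expect the real work of the argument to concentrate.
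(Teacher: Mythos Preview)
Your proposal diverges from the paper's argument at the very first step, and the divergence is not a matter of taste: it is the source of the gap you yourself flag at the end.

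You cast Theorem~\ref{maintrick} as a \emph{preservation} principle: start with a seed that is already non-complex with either orientation, fiber-sum with the trivial bundle $\Sigma_g\times\Sigma_{h'}$, and argue that non-complexness persists. The paper's Theorem~\ref{maintrick} is instead a \emph{creation} principle. The seed is only required to satisfy a homological condition (for horizontal stabilization, $b_1(Y)>2h$ and even; for vertical stabilization, the existence of a square-zero section), and one stabilizes not with the trivial bundle $P$ but with the non-trivial blocks $Q_m$ or $R_m$ of Example~\ref{Elementary}. These blocks are engineered so that the Mayer--Vietoris computation forces $b_1$ of the result to be \emph{odd} and introduces a $\Z_m$ summand into $H_1$. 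Odd $b_1$ then feeds into Lemma~\ref{notcomplex}, which is a direct Enriques--Kodaira argument ruling out complex structures on \emph{both} orientations at once; no Kotschick--Hillman reduction, no signature formula for classifying maps, and no Seiberg--Witten input is used anywhere. The infinite families are distinguished simply by the torsion order $m$ in $H_1$, not by varying a gluing map $\phi$.

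Consequently your worry about the reversed orientation is well-founded for your scheme but is a non-issue in the paper's. Two further mismatches: for part~(i) the paper uses the genus-$3$ bundles of \cite{EKKOS} over $\Sigma_{h\geq 9}$ (which admit square-zero sections) and stabilizes \emph{vertically} to raise the fiber genus to any $g\geq 4$; horizontal stabilization is unavailable here since, as Remark~\ref{genusthree} notes, those bundles have $b_1=2h$. For part~(ii) the Bryan--Donagi surfaces are used as-is (they are K\"ahler, hence $b_1$ even, and the second fibration shows $b_1>4$), and horizontal stabilization with $Q_m$ over $\Sigma_{h'}$, $1\leq h'\leq 6$, produces the non-complex families directly; no preliminary ``mild modification'' is needed.
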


Parshin and Arakelov's proofs of the Geometric Shafarevich Conjecture show that there are finitely many holomorphic fibrations with fixed fiber genus $g \geq 2$, base genus $h \geq 0$, and degeneracy \cite{P,A}. The families constructed in Example \ref{Elementary} and Corollary \ref{noncomplexLFs} demonstrate that a symplectic analogue of this conjecture fails to hold. In fact, the Parshin-Arakelov finiteness theorem, combined with the array of results and observations mentioned in the second paragraph, hands us an alternative way to prove that within the families we get in Corollaries \ref{noncomplexbundles1} and \ref{noncomplexbundles2} infinitely many members are non-complex, instead of employing the Lemma \ref{notcomplex} of this note, which merely relies on the Enrique-Kodaira classification of complex surfaces. Then the gap in the literature we are filling essentially comes down to our construction of infinite families of surface bundles with fixed fiber and base genera whose total spaces have non-zero signatures.

\vspace{0.3in}
\noindent \textit{Acknowledgements.} The author was partially supported by the NSF grant DMS-0906912. We would like to thank Hisaaki Endo, Jonathan Hillman and Andras Stipsicz for helpful comments on a preliminary version of this paper.

\newpage
\section{Constructions}

In what follows, we will make repeated use of the following lemma:
 
\begin{lem} \label{notcomplex} 
Let $X$ be a closed oriented $4$-manifold with $b_1(X)$ odd. If $X$ is minimal, $b_2(X) \neq 0$, and either the euler characteristic $\eu(X)$ or the signature $\sigma(X)$ is non-zero, then neither $X$ nor $\bar{X}$ ($X$ with reversed orientation) admits a complex structure. In particular, the result holds if $X$ admits a genus $g$ bundle (resp. relatively minimal Lefschetz fibration) over a genus $h$ surface with $g \geq 2$, $h \geq 2$ (resp. $h \geq 1$).  
\end{lem}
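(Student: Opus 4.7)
My strategy is to assume for contradiction that $X$ admits a complex structure and invoke the Enriques--Kodaira classification, combined with an orientation-reversal symmetry, to force $\sigma(X) = 0 = \eu(X)$, contradicting the hypothesis. The symmetry in question is: $X$ admits a complex structure if and only if $\bar X$ does, because whenever $J$ is an integrable almost complex structure on $X$, so is $-J$ (the Nijenhuis tensor is invariant under $J \mapsto -J$), and $-J$ induces the reversed orientation. This will let me apply the constraints from Enriques--Kodaira on both orientations simultaneously.

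Suppose then that $X$ admits a complex structure $J$. Since $b_1(X)$ is odd, $(X, J)$ is non-K\"ahler by the Miyaoka--Siu--Buchdahl--Lamari theorem. Since $X$ is smoothly minimal with $b_2(X) \neq 0$, this complex structure is necessarily complex-minimal, as a non-minimal complex surface smoothly splits off a $\CPb$ summand. By the Enriques--Kodaira classification, a compact minimal non-K\"ahler complex surface is either (i) a class VII surface, with $b_1 = 1$ and $b^+ = 2p_g = 0$, so that $\sigma = -b_2$ and $\eu = b_2$; or (ii) an elliptic surface (Kodaira or properly elliptic) with $c_1^2 = 0$, so that Hirzebruch's signature formula gives $3\sigma + 2\eu = 0$, i.e.\ $\sigma = -\tfrac{2}{3}\eu$. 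In either case $\sigma \leq 0$, with $\sigma = 0$ if and only if $\eu = 0$. Running the same analysis on the conjugate complex surface $(\bar X, -J)$ yields $\sigma(\bar X) \leq 0$, i.e.\ $\sigma(X) \geq 0$. Together, $\sigma(X) = 0$, whence $\eu(X) = 0$, contradicting the hypothesis.

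For the ``in particular'' clause, the plan is to check that the fibration structure automatically supplies the remaining hypotheses of the main part. Smooth minimality follows since $\pi_2$ vanishes for both the fiber and the base at the stated genera, which forces every smoothly embedded sphere in $X$ to be nullhomotopic and rules out smooth $(-1)$-spheres (relative minimality handles sphere components of singular fibers in the Lefschetz case). The homologically essential regular fiber gives $b_2(X) \neq 0$, and $\eu(X) = (2-2g)(2-2h) + \mu$, with $\mu \geq 0$ the number of critical points (strictly positive for Lefschetz fibrations), is strictly positive, so $\eu(X) \neq 0$. The principal technical obstacle is the Enriques--Kodaira case analysis establishing $\sigma \leq 0$ together with the equivalence $\sigma = 0 \Leftrightarrow \eu = 0$ for minimal non-K\"ahler complex surfaces; once these are in hand, the orientation-reversal symmetry closes the argument cleanly.
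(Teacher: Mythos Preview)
Your argument contains a fatal error at the outset: the claim that $-J$ induces the reversed orientation is false in real dimension $4$. On a real $2n$-manifold the orientations induced by $J$ and by $-J$ differ by the sign $(-1)^n$; here $n=2$, so they coincide. (A decisive example: $\CP$ is complex, while $\CPb$ does not even admit an almost complex structure.) Hence the ``symmetry'' you invoke --- that $X$ admits a complex structure if and only if $\bar X$ does --- is simply unavailable, and the step where you combine $\sigma(X)\le 0$ with $\sigma(\bar X)\le 0$ to force $\sigma(X)=0$ collapses. What survives of your Enriques--Kodaira case analysis is only the inequality $\sigma(X)\le 0$, which does not contradict the hypotheses.

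The paper proceeds differently and never uses any such symmetry. It argues \emph{directly} that a minimal complex surface with $b_1$ odd and $b_2\neq 0$ must have $\eu=0$ (and hence $\sigma=0$), invoking Kodaira's theorem that minimal elliptic surfaces with odd $b_1$ have vanishing Euler characteristic --- a conclusion your relation $3\sigma+2\eu=0$ alone does not deliver. Because all of the hypotheses (odd $b_1$, minimality, $b_2\neq 0$, nonvanishing of $\eu$ or $\sigma$) are orientation-insensitive, the identical argument is then re-run with $\bar X$ in place of $X$; no implication from one orientation to the other is ever asserted. Finally, for the ``in particular'' clause your $\pi_2$-vanishing argument is fine for honest bundles, but it does not transfer to Lefschetz fibrations, which are not fibrations in the homotopy-theoretic sense; the parenthetical about relative minimality is not a substitute for a proof, and minimality there requires Stipsicz's theorem, which the paper cites.
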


\begin{proof}
Assume $X$ is minimal and admits a complex structure. From the Enrique-Kodaira classification of complex surfaces, a complex surface with odd first betti number is either of type VII or elliptic. Since $b_2(X) \neq 0$, $X$ cannot be of type VII.  On the other hand, it was shown by Kodaira that minimal elliptic surfaces with odd betti number necessarily have zero euler characteristic (Theorem 1.38 in \cite{Fundamental}), and in turn, they all have zero signature. So $X$ cannot be complex if either $\eu(X)$ or $\sigma(X)$ is non-zero. The same line of arguments shows that $\bar{X}$ cannot be complex either. 

Now suppose $X$ admits a genus $g$ bundle over a genus $h$ surface, with $g, h \geq 2$. First of all, $X$ is acyclic, and in particular, minimal. As the fiber genus $g \geq 2$, the fiber is homologically essential, so $b_2(X) \neq 0$. Moreover, $\eu(X)= 4(g-1)(h-1) \neq 0$. So the result follows as before. Lastly, if $X$ admits a relatively minimal genus $g$ Lefschetz fibration over a genus $h$ surface, and $g \geq 2$, $h \geq 1$, then the relative minimality of the fibration implies that $X$ is minimal as shown by Stipsicz in \cite{S}, $b_2(X) \neq 0$, and $\eu(X)= 4(g-1)(h-1) +k \neq 0$, where $k$ is the number of critical points. So the same arguments apply. 
\end{proof}

The stabilizations featured in the current article are of two types: \textit{Horizontal} and \textit{vertical stabilizations}, which are respectively the standard fiber sums and section sums of given surface bundles or Lefschetz fibrations with some elementary surface bundles which we present in the next example. All these stabilizations are instances of the generalized fiber sum operation, and they can be performed symplectically. 

\begin{example}[Elementary blocks] \label{Elementary} 
Let $m, g, h$ be positive integers, and $a, b$ be two non-separating simple closed curves whose homology classes are primitive and distinct in $H_1(\Sigma_g)$. For simplicity, assume that  $g + h \geq 3$, and $a$ and $b$ do not intersect when $h=1$. Now, let $t_a$ and $t_b$ denote the positive Dehn twists along $a$ and $b$, respectively. 

Set $p: P=P(g,h)=\Sigma_g \x \Sigma_h \to \Sigma_h$ to be the trivial genus $g$ bundle prescribed by the projection onto the second component of $P$. Define the bundle \linebreak $q_m: Q_m=Q_m(g,h,a) \to \Sigma_h$ as the one whose monodromy representation consists of only one non-trivial factor, which is $[t_a^m, 1]$. Similarly, we define the bundle $r_m: R_m= R_m(g,h,a,b) \to \Sigma_h$ as the one whose monodromy representation consists of only one non-trivial factor $[t_a^n, t_b]$ if $h=1$, and otherwise the one with two non-trivial factors $[t_a^m, 1]$ and $[t_b, 1]$. Here we will suppress $g, h, a$ and $b$ or sometimes only $a$ and $b$ from the notation. Since the monodromies of $p, q_m$, and $r_m$ are all supported on $\Sigma_g$ minus a disk, they admit sections $s_P$, $s_{Q_m}$ and $s_{R_m}$ of self-intersection zero, respectively. From the monodromy representations, it is easy to compute the first homologies as
\[
H_1(P)= \Z^{2g+2h} \, , \ \ H_1(Q_m)= \Z^{2g+2h-1} \oplus \Z_m \ , \text{and} \  H_1(R_m)= \Z^{2g+2h-2} \oplus \Z_m \ ,
\]
and the signatures of all as zero.  

The bundles $Q_m$ are straightforward generalizations of the famous Kodaira-Thurston manifold, which was the first example of a symplectic but not K\"ahler manifold. Since $b_1(Q_m)=2g+2h-1 $ is odd, for $g, h \geq 2$, we see that $q_m: Q_m \to \Sigma_h$ are examples of surface bundles whose total spaces cannot be complex by Lemma~\ref{notcomplex}. It is worth noting that when $g=1$, these manifolds do admit complex structures making the bundle holomorphic.
\end{example}

We are now ready to spell out our main recipe to produce surface bundles with non-zero signatures, whose total spaces cannot be complex: 

\begin{theorem} \label{maintrick} 
\emph{\textbf{(1)}  [Horizontal stabilization]} If $y: Y \to \Sigma_h$ is a genus $g$ surface bundle (resp. Lefschetz fibration) such that the the fiber is primitive in $H_2(Y)$, $g \geq 2$, $h \geq 1$ (resp. $h \geq 0$), and $b_1(Y)> 2h$ is even, then the horizontal stabilization of $(Y,y)$ with the genus $g$ bundle $(Q_m(g,h'), q_m)$ results in a genus $g$ surface bundle (resp. Lefschetz fibration) $y'_m: Y'_m \to \Sigma_{h+h'}$, where $Y'_m$ has the same signature as $Y$ and does not admit a complex structure with either orientation. The manifolds $Y'_m$ and $Y'_n$ are not homotopy equivalent for $m \neq n$. \vspace{0.1cm}

\emph{\textbf{(2)} [Vertical stabilization]} If $z: Z \to \Sigma_h$ is a genus $g$ surface bundle (resp. Lefschetz fibration) admitting a section $s_Z$ of self-intersection zero, with $g \geq 1$, $h \geq 2$  \linebreak (resp. $h \geq 1)$, then the section sum\footnote{We shall note that ``the'' section sum operation is not well-defined up to isomorphisms of surface bundles, or said differently, when the bundle maps are fixed, the resulting bundle, even up to isomorphism, depends on the chosen gluing map. Nevertheless, we ignore this issue as the results we state throughout the article hold for any such gluing.} of $(Z, z, s_Z)$ with either $(Q_m(g',h), q_m, s_{Q_m})$ or $(R_m(g',h), r_m, s_{R_m})$ depending on whether $b_1(Z)$ is even or odd, results in a genus $g+g'$ bundle (resp. Lefschetz fibration) $z'_m : Z'_m \to \Sigma_h$, where $Z'_m$ has the same signature as $Z$ and does not admit a complex structure with either orientation. The manifolds $Z'_m$ and $Z'_n$ are not homotopy equivalent for $m \neq n$. 
\end{theorem}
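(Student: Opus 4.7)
The plan is to verify four things for both stabilizations: (a) the stabilized 4-manifold really is a bundle (resp.\ Lefschetz fibration) over the claimed base with the claimed fiber, (b) its signature equals that of the un-stabilized fibration, (c) the hypotheses of Lemma~\ref{notcomplex} are satisfied (with $b_1$ odd being the hardest one), and (d) the family is infinite up to homotopy equivalence. The bulk of the work is the $b_1$ computation inside (c), together with producing a homotopy invariant in (d) that discriminates the parameter~$m$.

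The routine parts go as follows. Horizontal and vertical stabilizations are standard fiber sums, so the new fibration is described by concatenating the monodromies of the summands along disjoint generators of $\pi_1$ of the new base (part~1) or new fiber (part~2). Since $Q_m$ and $R_m$ are bundles, no vanishing cycles are added, so the stabilized Lefschetz fibration remains relatively minimal. Novikov additivity across the separating hypersurface $\Sigma_g \times S^1$ (resp.\ $\Sigma_h \times S^1$) along which the gluing is performed, together with $\sigma(Q_m) = \sigma(R_m) = 0$ from Example~\ref{Elementary}, gives $\sigma(Y'_m) = \sigma(Y)$ and $\sigma(Z'_m) = \sigma(Z)$. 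Minimality is immediate by asphericity when the total space is a bundle with fiber genus $\geq 2$, and by Stipsicz's theorem \cite{S} in the relatively minimal Lefschetz case. Primitivity of the fiber class is preserved by the gluing, so $b_2 \neq 0$, and the Euler characteristic $4(g-1)(b-1) + k$ is non-zero for the permitted base genera $b$ and critical point counts $k$.

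The main step is to show $b_1$ is odd and to exhibit $m$-dependent torsion in $H_1$. For any bundle or relatively minimal Lefschetz fibration $X \to \Sigma_b$ with fiber $\Sigma_f$ and monodromy $\phi$, a Leray--Serre argument yields
\[
H_1(X; \mathbb{Q}) \cong H_1(\Sigma_f; \mathbb{Q})_\phi \oplus H_1(\Sigma_b; \mathbb{Q}),
\]
where the first summand is the coinvariants of $\phi$; in the Lefschetz case this reflects the fact that non-separating vanishing cycles lie in the image of $(t_{c_i}-1)$ while separating ones are already null-homologous. In part~(1), the monodromy of $Y'_m$ augments that of $Y$ by appending one non-trivial factor $t_a^m$, so the coinvariants gain exactly the relation $m[a] \equiv 0$. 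The hypothesis $b_1(Y) > 2h$ says that the rational coinvariants of $Y$ have positive dimension, so (since every non-zero primitive class in $H_1(\Sigma_g; \Z)$ is realized by a non-separating simple closed curve) one can choose the curve $a$ underlying $Q_m$ so that $[a]$ is non-trivial there; the new relation then drops the rational rank by one and contributes a $\Z_m$ summand, yielding $b_1(Y'_m) = b_1(Y) + 2h' - 1$ (odd) with a torsion class of order exactly $m$. In part~(2), the new fiber $\Sigma_{g+g'}$ decomposes as a connected sum, giving $H_1(\Sigma_{g+g'}) = H_1(\Sigma_g) \oplus H_1(\Sigma_{g'})$ on which the monodromy splits as $\phi_Z \oplus \phi_{Q_m}$ or $\phi_Z \oplus \phi_{R_m}$, so the coinvariants and the $\Z_m$ torsion split accordingly; one computes $b_1(Z'_m) = b_1(Z) + 2g' - 1$ if $Q_m$ is used and $b_1(Z) + 2g' - 2$ if $R_m$ is used, so the parity-matched choice in the theorem makes $b_1(Z'_m)$ odd in both cases. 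Lemma~\ref{notcomplex} then rules out complex structures with either orientation, while the cyclic torsion summand of order $m$ in $H_1$ distinguishes the total spaces for distinct values of $m$. The hard part will be keeping the integral $H_1$ computation honest in the Lefschetz setting (dealing carefully with the extra relations coming from meridians of critical values) and justifying the direct-sum splitting of coinvariants in part~(2) produced by the section sum.
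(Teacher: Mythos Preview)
Your proposal is correct and follows the same overall strategy as the paper: arrange that $b_1$ of the stabilized space is odd by a careful choice of the curve $a$, extract a $\Z_m$-summand in $H_1$ to distinguish the family, and finish with Lemma~\ref{notcomplex} and Novikov additivity. The difference is only in how $H_1$ is computed. The paper uses Mayer--Vietoris along the gluing hypersurface ($\Sigma_g \times S^1$ in part~(1), the section neighborhood $\Sigma_h \times S^1$ in part~(2)), bringing in the 5-term exact sequence~\eqref{homologysequence} only to translate the hypothesis $b_1(Y) > 2h$ into the existence of a fiber class with non-torsion image; you instead work throughout with the Leray--Serre coinvariants formula $H_1(X;\mathbb{Q}) \cong H_1(\Sigma_f;\mathbb{Q})_\phi \oplus H_1(\Sigma_b;\mathbb{Q})$ and track the appended monodromy factor $t_a^m$ directly. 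The paper's cut-and-paste route treats bundles and Lefschetz fibrations uniformly, sidestepping the justification you flag in your last sentence about meridians of critical values; your monodromy viewpoint, on the other hand, makes the provenance of the $\Z_m$-torsion and the block splitting of coinvariants in part~(2) more transparent.
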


\begin{proof}
Let $y'_m: Y'_m \to \Sigma_h$ be the result of the fiber sum of the surface bundle $(Y, y)$ with $(Q_m, q_m)$. From the Mayer-Vietoris sequence, we calculate $H_1(Y'_n)$ as the cokernel of the homomorphism
\[ 
i_Y \oplus i_{Q_m}: H_1(\Sigma_g) \to H_1(Y) \oplus H_1(Q_m) ,
\]
where $i_Y$ and $i_{Q_m}$ are the homomorphisms induced by the inclusions of regular fibers into $Y$ and ${Q_m}$ respectively. Note that the classes $i_Y(\Sigma_g)$ and $i_{Q_m}(\Sigma_g)$ are both primitive in $H_1(Y)$ and $H_1(Q_m)$, respectively, so there is no extra torsion term involved in this calculation. (However our arguments would work out the same way when there is a torsion term in the case of a non-primitive fiber class \cite{Hamilton}.) 

Passing to real coefficients, we can easily calculate $b_1(Y'_n)$ from the induced map $i_Y \oplus i_{Q_m}$ above as $b_1(Y'_n)= b_1(Y)+b_1({Q_m})-2h +d$, where $d$ is the dimension of the kernel. From the homotopy exact sequence of a fibration, we have the exact sequence  
\[ 
0=\pi_2(\Sigma_h) \rightarrow \pi_1(\Sigma_g) \stackrel{(i_Y)_{*}}{\rightarrow} \pi_1(Y) \stackrel{y_{*}}{\rightarrow} \pi_1(\Sigma_h) \rightarrow 0 \, ,
\]
which in turn gives us the following piece of the $5$-exact sequence in homology:

\begin{equation} \label{homologysequence} 
H_1(\Sigma_g)_{\pi_1(\Sigma_h)}= \frac{\pi_1(\Sigma_g)}{[\pi_1(Y), \pi_1(\Sigma_g)]}  \stackrel{(i_Y)_{*}}{\rightarrow} H_1(Y) \stackrel{f_{*}}{\rightarrow} H_1(\Sigma_h) \rightarrow 0 \, .
\end{equation}

Since $b_1(Y)> 2h$ by assumption, there should be a class in $H_1(\Sigma_g)$ the image of which under $i_Y$ is   non-torsion in $H_1(Y)$. Let $\hat{a}$ be a non-separating simple closed curve on $\Sigma_g$ representing the primitive root of this homology class mapped to a non-torsion (resp. torsion) element. Then set $a=\hat{a}$ in the construction of \linebreak $(Q_m = Q_m(g, h',a), q_m)$, and take the standard fiber sum with $(Y,y)$. We therefore get $d=0$, and $b_1(Y'_n)= b_1(Y)+b_1(Q_m)-2h$ is odd. Moreover, we deduce that $H_1(Y'_m )$ has a $\Z_m$ component coming from $H_1({Q_m})$.

Now let $z'_m: Z'_m \to \Sigma_h$ be the result of the section sum of $(Z, z, s_Z)$ with any one of the triples in the statement, which we denote by $(T, t, s_T)$. From the Mayer-Vietoris sequence, $H_1(Z'_m)$ is the cokernel of the homomorphism
\[ 
s_Z \oplus s_T : H_1(\Sigma_h) \to H_1(Z) \oplus H_1(T) .
\]
Note that both $s_Z(\Sigma_h)$ and $s_T(\Sigma_h)$ are primitive classes in $H_1(Z)$ and $H_1(T)$, respectively, so once again, there is no torsion term involved in this calculation. Moreover, each section map should be injective onto its image. No matter whether $T$ is $Q_m$ or $R_m$, we see that $H_1(Z'_m)$ would have a $\Z_m$ component, and $b_1(Z'_m)=b_1(Z)+b_1(T)-2h$. Therefore, the parity of $b_1(Z'_m)$ is the same as the parity of $b_1(Z)+b_1(T)$. So for the choice of $T$ as indicated in the statement, we get $b_1(Z'_m)$ odd. 

For all the bundles $(Y'_m, y'_m)$, $(Z'_m, z'_m)$ we have constructed above, the fiber and base genera are at least two, thus by Lemma \ref{notcomplex} the total spaces do not admit complex structures with either orientations. Since $\sigma(Q_m)=\sigma(R_m)=0$, by the Novikov additity $\sigma(Y'_m)=\sigma(Y)$ and $\sigma(Z'_m) = \sigma(Z)$. Varying $m$ we get pairwise non-homotopic $4$-manifolds in each family $\{(Y'_m, y'_m) \}_{m \in \Z^+}$, $\{(Z'_m, z'_m) \}_{m \in \Z^+}$.

Analogous statements for Lefschetz fibrations are proved mutadis mutandis.  
\end{proof}

\begin{rk}
Although not needed for the results that will follow in this note, we can extend the part (1) of our theorem to stabilize surface bundles $(Y,y)$ with $b_1(Y)$ odd to obtain infinite families of bundles over higher genera surfaces. In this case, $b_1(Y) \neq 2g+ 2h$, so there is also a class in $H_1(\Sigma_g)$ mapped to a torsion element in $H_1(Y)$ under $i_Y$, a primitive root of which can be represented by a non-separating simple closed curve $\hat{b}$. We can then use a slightly more general version of the bundles $(R_m = R_m(g, h',a,b), r_m)$ where $a$ and $b$ can be matched with $\hat{a}$ and $\hat{b}$ to obtain the desired result.
\end{rk}

Given any surface bundle as in the statement of Theorem \ref{maintrick} whose total space has non-zero signature, we can now produce the families we are interested in. The genus $3$ surface bundles over genus $h \geq 9$ surfaces constructed in \cite{EKKOS}, which admit sections of self-intersection zero and have total spaces with non-zero signatures, allow us to get: 

\begin{cor} \label{noncomplexbundles1}
For every pairs of integers $g \geq 4$ and $h \geq 9$, there are infinitely many (pairwisenon-homotopic) closed oriented $4$-manifolds with non-zero signature, each of which admits a genus $g$ surface bundle over a genus $h$ surface, and yet do not admit any complex structure with either orientation. 
\end{cor}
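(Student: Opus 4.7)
The plan is to apply the vertical stabilization of Theorem~\ref{maintrick}(2) with seed bundles borrowed from the work of Endo, Korkmaz, Kotschick, Ozbagci, and Stipsicz. For each $h \geq 9$, the paper \cite{EKKOS} provides a genus $3$ surface bundle $z : Z \to \Sigma_h$ that carries a section $s_Z$ of self-intersection zero and has $\sigma(Z) \neq 0$. These are exactly the inputs required by part (2) of Theorem~\ref{maintrick}, since the seed fiber genus $3 \geq 1$ and the base genus $h \geq 9 \geq 2$.

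Given $g \geq 4$ and $h \geq 9$, I would set $g' = g - 3 \geq 1$ and form the section sum of $(Z, z, s_Z)$ with either $(Q_m(g', h), q_m, s_{Q_m})$ or $(R_m(g', h), r_m, s_{R_m})$, the choice being dictated by the parity of $b_1(Z)$ in the manner of Theorem~\ref{maintrick}(2) so as to ensure that the output has odd $b_1$. Call the result $z'_m : Z'_m \to \Sigma_h$; it is then a genus $3 + g' = g$ surface bundle over $\Sigma_h$. Theorem~\ref{maintrick}(2) simultaneously delivers all of the desired properties: $\sigma(Z'_m) = \sigma(Z) \neq 0$ by Novikov additivity and the vanishing of the signatures of $Q_m, R_m$; neither $Z'_m$ nor $\bar{Z'_m}$ admits a complex structure, via Lemma~\ref{notcomplex} applied to the odd $b_1$ together with the non-vanishing signature; and the $\Z_m$ summand appearing in $H_1(Z'_m)$ distinguishes the members of $\{Z'_m\}_{m \in \Z^+}$ up to homotopy equivalence. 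Letting $m$ range over $\Z^+$ therefore produces the required infinite family.

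There is no genuine obstacle beyond assembly, because the substantive technical content has been packaged into Theorem~\ref{maintrick}(2) and into the existence of suitable seed bundles. The only verification one needs is that the EKKOS bundles meet the hypotheses of the vertical stabilization, namely that they carry a section of self-intersection zero and have non-zero signature, both of which are explicitly established in \cite{EKKOS}. If anything is to be called the main point, it is the observation that the genus $3$ base case of EKKOS suffices to reach every fiber genus $g \geq 4$ via vertical stabilization without having to re-examine signature calculations at higher fiber genus.
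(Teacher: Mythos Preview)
Your proposal is correct and follows exactly the paper's own route: take the EKKOS genus $3$ bundles over $\Sigma_h$ for $h \geq 9$, which possess sections of self-intersection zero and non-zero signature, and apply the vertical stabilization of Theorem~\ref{maintrick}(2) with $g' = g-3$ to reach any fiber genus $g \geq 4$. The paper states this tersely in the sentence preceding the corollary, and your write-up simply fleshes out the same argument; note also that Remark~\ref{genusthree} confirms that part~(1) is unavailable here since the EKKOS bundles have $b_1 = 2h$, so vertical stabilization is indeed the intended mechanism.
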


\begin{rk} \label{genusthree}
The genus $3$ surface bundles over surfaces of \cite{EKKOS}, nor their various modifications seem to meet the homology criterion we required in part (1) of Theorem \ref{maintrick}. To the best of our knowledge, every genus $3$ surface bundle over a genus $h$ surface whose total space has non-zero signature that appears in the literature was constructed from some explicit monodromy factorization, and all have $b_1 = 2h$. On the other hand, none of the complex examples arising from Atiyah, Kodaira, Hirzebruch type of branched covering constructions have fiber genus $3$, leaving the genus $3$ case of the main question of this paper as a curious one.  
\end{rk}

For lower base genera, we have the following weaker result, relying on the surface bundles constructed in \cite{BD}:

\begin{cor} \label{noncomplexbundles2}
For any positive integer $N$ and for $3 \leq h \leq 8$, there exists $g \geq N$ such that there are infinite families of (pairwise non-homotopic) closed oriented $4$-manifolds with non-zero signature, each of which admits a genus $g$ surface bundle over a genus $h$ surface, and yet do not admit any complex structure with either orientation. 
\end{cor}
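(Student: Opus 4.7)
The plan is to imitate the proof of Corollary \ref{noncomplexbundles1}, but with seed bundles supplied by the Bryan--Donagi construction in \cite{BD} in place of the Endo--Korkmaz--Kotschick--Ozbagci--Stipsicz bundles. That is, I would combine Bryan--Donagi's holomorphic examples (whose total spaces are already complex, hence not immediately useful) with the vertical stabilization of Theorem \ref{maintrick}(2) to destroy the complex structure while preserving the non-zero signature, and then let the stabilization parameter $m$ vary to produce the infinite family.

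Step one: fix $h \in \{3, \ldots, 8\}$ and invoke \cite{BD} to obtain, for every $n \geq 2$, a holomorphic genus $g_n$ surface bundle $z \colon Z \to \Sigma_h$ whose signature is non-zero and whose fiber genus $g_n$ grows polynomially in $n$ (so that $g_n \to \infty$). Given $N$, choose $n$ large enough that $g_n + 1 \geq N$ and set $g = g_n + 1$. Since $g_n \geq 2$ and $h \geq 3$, the bundle $Z$ is minimal and $\sigma(Z) \neq 0$ is preserved under the forthcoming surgery by the Novikov additivity as in the proof of Theorem \ref{maintrick}.

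Step two: verify that the Bryan--Donagi bundle $Z \to \Sigma_h$ admits a section $s_Z$ of self-intersection zero, so that Theorem \ref{maintrick}(2) applies. This is a feature of the Bryan--Donagi construction, which produces the family from a cyclic branched covering built on an auxiliary curve family, inheriting a distinguished section; after a standard blow-up/blow-down in the fiber direction if needed, one can arrange this section to have vanishing self-intersection without changing the fiber or base genus or the signature. This is the one ingredient that requires unwinding the construction in \cite{BD}, and it is the main potential obstacle in the argument; everything else is a formal consequence.

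Step three: apply Theorem \ref{maintrick}(2) with $g' = 1$, performing the section sum of $(Z, z, s_Z)$ with either $(Q_m(1,h), q_m, s_{Q_m})$ or $(R_m(1,h), r_m, s_{R_m})$ according to the parity of $b_1(Z)$. The theorem produces, for each $m \in \Z^+$, a genus $g_n + 1 = g \geq N$ surface bundle $z'_m \colon Z'_m \to \Sigma_h$ with $\sigma(Z'_m) = \sigma(Z) \neq 0$, such that $Z'_m$ admits no complex structure in either orientation (by Lemma \ref{notcomplex}, since $b_1(Z'_m)$ is odd and $\sigma(Z'_m) \neq 0$). Since $H_1(Z'_m)$ has a $\Z_m$ torsion summand inherited from $H_1(Q_m)$ or $H_1(R_m)$, the members of $\{Z'_m\}_{m \in \Z^+}$ are pairwise non-homotopy-equivalent, yielding the desired infinite family.
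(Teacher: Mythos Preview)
Your proposal has two genuine gaps.

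\textbf{Gap in Step 1.} The Bryan--Donagi construction does \emph{not} supply, for each fixed $h\in\{3,\dots,8\}$ and each $n$, a genus $g_n$ bundle over $\Sigma_h$. What \cite{BD} produces is a single complex surface $X_n$ carrying \emph{two} fibrations, with $(g,h)=((4n-2)n^2+1,\,2)$ and $(g,h)=(2n^2+1,\,2n)$. In particular the only base genera that arise are $2$ and the even integers $2n$, and for the latter the fiber genus is determined by $n$, not free to grow. So your seed bundles over $\Sigma_h$ for $h=3,5,7$ simply do not exist in \cite{BD}, and for $h=4,6,8$ you get at most one fiber genus each, which does not let you push $g\geq N$.

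\textbf{Gap in Step 2.} Even setting aside Step~1, you do not verify that the Bryan--Donagi bundles admit a section of square zero, and your proposed fix (``blow-up/blow-down in the fiber direction'') is not innocuous: a blow-up changes the signature by $\pm 1$ and destroys the surface-bundle structure (you now have a reducible fiber), so you cannot adjust the section square this way without altering exactly the data you need to preserve.

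\textbf{How the paper proceeds.} The paper avoids both problems by using \emph{horizontal} stabilization (Theorem~\ref{maintrick}(1)) rather than vertical. It takes the first Bryan--Donagi fibration on $X_n$, the one over $\Sigma_2$ with fiber genus $(4n-2)n^2+1$, and checks the hypothesis $b_1(X_n)>2\cdot 2=4$ by applying the exact sequence~(\ref{homologysequence}) to the \emph{other} fibration on the same $X_n$ (base genus $2n$), which forces $b_1(X_n)\geq 2n^2+1>4$; evenness of $b_1$ is automatic since $X_n$ is K\"ahler. Horizontal stabilization with $Q_m(g,h')$ for $h'=1,\dots,6$ then raises the base genus from $2$ to any $h\in\{3,\dots,8\}$ while keeping the fiber genus $(4n-2)n^2+1$, which can be made $\geq N$ by choosing $n$ large. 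No section is needed.
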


\begin{proof}
In \cite{BD}, the authors construct complex surfaces $X_n$ with signature $\sigma(X_n)= \frac{8}{3}(n^3-n)$ admitting   two fibrations with fiber and base genera $(g,h)$ equal to \linebreak $((4n-2)n^2 + 1, 2)$ and $(2 n^2 + 1, 2n)$, respectively, for each $n \geq 2$. Applying the exact sequence (\ref{homologysequence}) above to the latter family of fibrations, we conclude that $b_1(X_n) \geq 2 n^2+ 1 > 4$. (Or one can see this from the authors' branched covering construction.) So we can horizontally stabilize the former fibrations to get the desired result. 
\end{proof}

\begin{rk} 
There are other surface bundles over small base genera (as small as $3$) constructed using similar branched covering techniques following Atiyah, Kodaira, and Hirzebruch (\cite{Hirzebruch, BDS, Lebrun}), to which the horizontal stabilizations of Theorem \ref{maintrick} can be applied to construct similar families of surface bundles with fiber genera different than those in the above proof.  
\end{rk}
%For a systematic construction however, one of course would like to get a surface bundle with a self-intersection zero section. 

We now turn to Lefschetz fibrations. In \cite{OS}, Ozbagci and Stipsicz constructed genus $2$ Lefschetz fibrations over the $2$-sphere, total spaces of which cannot admit complex structure with either orientation, using a twisted fiber sum of two copies of a genus $2$ Lefschetz fibrations over the $2$-sphere due to Matsumoto \cite{Mat2}. Matsumoto's fibrations, and in turn the construction of Ozbagci and Stipsicz were generalized by Korkmaz in \cite{Kork} for any $g \geq 2$. (Also see \cite{SmithDis} for other $g=2$, \cite{EN} for $g=3,4,5$ examples, and \cite{FS1, FS2} for families whose total spaces are in a fixed homeomorphism class for any $g \geq 3$.) Although these generalized Matsumoto fibrations and the standard fiber sum operation have complex interpretations, the twisted fiber sum operation does not, which allows one to perform it carefully so as to obtain fibrations on $4$-manifolds which cannot support complex structures.

We will present a simple extension of these constructions to any base genera: 

\begin{cor} \label{noncomplexLFs}
For every $g \geq 2$ and $h \geq 0$, there are infinitely many different (pairwise non-homotopic) closed oriented $4$-manifolds with non-zero signature, each of which admits a genus $g$ Lefschetz fibration with non-empty critical locus over a genus $h$ surface, and yet do not admit any complex structure with either orientation.
\end{cor}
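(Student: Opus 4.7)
The plan is to split by base genus. For $h=0$, I would invoke the existing twisted fiber sum constructions in the literature to obtain the required infinite families. For $h\geq 1$, I would apply the horizontal stabilization of Theorem \ref{maintrick}(1) to a single standard Matsumoto-type genus $g$ Lefschetz fibration over $S^2$, which produces a family over any $\Sigma_h$ in one step.

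For $h=0$ and any $g\geq 2$, the generalized Matsumoto--Ozbagci--Stipsicz--Korkmaz construction provides a genus $g$ Lefschetz fibration $X_g\to S^2$ whose total space has non-zero signature and odd first Betti number, so by Lemma \ref{notcomplex} admits no complex structure with either orientation. To produce an infinite family, I would vary the integer twist parameter in the twisted fiber sum of two copies of the generalized Matsumoto fibration: Novikov additivity keeps the signature unchanged (and non-zero), while the order of the induced torsion in $H_1$ detects the twist, so the resulting manifolds are pairwise non-homotopic. For $g\geq 3$ the Fintushel--Stern families already cited in the text supply this directly, and for $g=2$ the twist argument or Smith's dissertation resolves the matter.

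For $h\geq 1$, I would start from a single untwisted Matsumoto--Korkmaz type genus $g$ Lefschetz fibration $W_g\to S^2$. This is K\"ahler, but it has non-zero signature and even positive $b_1(W_g)$ (for instance $b_1(W_2)=4$ from Matsumoto's original genus two example), and its regular fiber is primitive in $H_2$. Since $b_1(W_g)>0=2\cdot 0$ is even and $g\geq 2$, the hypotheses of Theorem \ref{maintrick}(1) hold with base genus zero. Horizontally stabilizing $W_g$ with $(Q_m(g,h),q_m)$ then yields a genus $g$ Lefschetz fibration $Y'_m\to\Sigma_{0+h}=\Sigma_h$, and the theorem gives an infinite family $\{Y'_m\}_{m\in\Z^+}$ of pairwise non-homotopic $4$-manifolds, each with $\sigma(Y'_m)=\sigma(W_g)\neq 0$ and with total space admitting no complex structure with either orientation.

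The main obstacle is verifying the technical input hypotheses on the Matsumoto--Korkmaz fibrations --- even positive $b_1$, non-zero signature, and primitivity of the fiber class --- from their explicit monodromy factorizations in the literature, and resolving the infinite family issue in the $g=2$, $h=0$ corner, where the classical constructions do not immediately produce infinite families. Once these points are settled, the rest of the argument is a routine invocation of Theorem \ref{maintrick}(1) together with Lemma \ref{notcomplex}.
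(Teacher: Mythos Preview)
Your approach is essentially the same as the paper's: for $h=0$ cite Korkmaz's twisted fiber sum families (with $\pi_1(Y_n(g))=\Z\oplus\Z_n$), and for $h\geq 1$ horizontally stabilize a single Korkmaz--Matsumoto fibration $Y(g)\to S^2$ having even positive $b_1$ and non-zero signature via Theorem~\ref{maintrick}(1). Two small factual slips to correct: Matsumoto's genus~$2$ fibration lives on $S^2\times T^2\#4\CPb$, so $b_1(W_2)=2$, not $4$ (the paper records $b_1(Y(g))=g$ for $g$ even, $g-1$ for $g$ odd); and the Fintushel--Stern families are in a \emph{fixed homeomorphism class}, hence homotopy equivalent, so they do not supply the pairwise non-homotopic examples you need for $h=0$ --- the twist-parameter argument you already sketched (i.e.\ Korkmaz's $Y_n(g)$) is the right source there.
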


\begin{proof}
Base genus $h=0$ case is covered by Korkmaz's examples. To apply Theorem~\ref{maintrick}, all we need is a relatively minimal genus $g$ Lefschetz fibration over the $2$-sphere whose total space has positive even first betti number. Such examples are found in abundance; we can for instance take the relatively minimal genus $g$ Lefschetz fibration $y: Y(g) \to S^2$ constructed by Korkmaz \cite{Kork} for any $g \geq 2$, where $H_1(Y)= \Z^{g}$ and $\sigma(Y)= -4$ for $g$ even, whereas for $g$ odd, $H_1(Y)= \Z^{g-1}$ and $\sigma(Y)=-8$. Hence the horizontal stabilizations of Theorem \ref{maintrick} yield the desired families.

Alternatively, by employing one more ---twisted--- fiber sum, we can take the relatively minimal genus $g$ Lefschetz fibrations $y_n: Y_n(g) \to S^2$ of Korkmaz's \cite{Kork}, which have $\pi_1(Y_n(g)) = \Z \oplus \Z_n$, for any $g \geq 2$ and $n \geq 1$. We then obtain the same result by performing horizontal stabilizations with $P(g,h)$ of Example \ref{Elementary} and invoking Lemma \ref{notcomplex} again.
\end{proof}

\vspace{0.3in}

\end{document}